\newcommand{\KK}{\mathcal{K}}
\newcommand{\LL}{\mathcal{L}}
\newcommand{\RR}{\mathds{R}}
\newcommand{\fall}{\:\forall\:}
\newcommand{\ex}{\:\exists\:}
\newcommand{\norm}[1]{\left\lvert#1\right\rvert}         
\newcommand{\mnorm}[1]{\left\lVert#1\right\rVert}        
\newcommand{\setn}[1]{\left\{#1\right\}}                 
\newcommand{\setcond}[2]{\left\{#1 \:\middle\vert\: #2\right\}}    
\newcommand{\setconds}[2]{\{#1 \:\vert\: #2\}} 
\newcommand{\defeq}{\mathrel{\mathop:}=}
\newcommand{\dah}{i.e., }
\newcommand{\lr}[1]{\!\left(#1\right)}
\newcommand{\cRR}{\overline{\mathds{R}}}
\newcommand{\strline}[2]{\left\langle#1,#2\right\rangle}
\newcommand{\clseg}[2]{\left[#1,#2\right]}
\newcommand{\skpr}[2]{\left\langle#1 \,\middle\vert\, #2\right\rangle}
\newcommand{\enquote}[1]{``#1''}
\newcommand{\norel}{\mathrel{\phantom{=}}}
\newcommand{\noequiv}{\mathrel{\phantom{\Longleftrightarrow}}}
\theoremstyle{plain}
\newtheorem{Satz}{Theorem}[section]
\newtheorem{Kor}[Satz]{Corollary}
\newtheorem{Lem}[Satz]{Lemma}
\newtheorem{Prop}[Satz]{Proposition}
\theoremstyle{definition}
\newtheorem{Def}[Satz]{Definition}
\newtheorem{Bem}[Satz]{Remark}
\DeclareMathOperator{\Proj}{proj}
\DeclareMathOperator{\co}{co}
\DeclareMathOperator{\cl}{cl}
\DeclareMathOperator{\inte}{int}
\DeclareMathOperator{\dist}{dist}
\DeclareMathOperator{\bi}{bi}
\DeclareMathOperator{\bh}{bh}
\DeclareMathOperator{\cc}{cc}
\DeclareMathOperator{\ic}{ic}
\let \eps \varepsilon
\let \subset \subseteq
\let \supset \supseteq
\begin{document}
\parindent 0pt
\title{Successive Radii and Ball Operators in Generalized Minkowski Spaces}
\author{Thomas Jahn\\
{\small Faculty of Mathematics, Technische Universit\"at Chemnitz}\\
{\small 09107 Chemnitz, Germany}\\
{\small thomas.jahn\raisebox{-1.5pt}{@}mathematik.tu-chemnitz.de}
}

\date{}
\maketitle
\allowdisplaybreaks[2]

\begin{abstract}
We investigate elementary properties of successive radii in generalized Minkowski spaces (that is, with respect to gauges), \dah we measure the \enquote{size} of a given convex set in a finite-dimensional real vector space with respect to another convex set. This is done via formulating some kind of minimal containment problems, where intersections or Minkowski sums of the latter set and affine flats of a certain dimension are incorporated. Since this is strongly related to minimax location problems and to the notions of diametrical completeness and constant width, we also have a look at ball intersections and ball hulls.
\end{abstract}

\textbf{Keywords:} ball hull, ball intersection, completeness, constant width, containment problem, convex distance function, gauge, generalized Minkowski space, Minkowski functional, successive radii

\textbf{MSC(2010):} 52A21, 52A27, 52A40

\section{Introduction}
Modifying the geometric configuration of the classical minimax location problem, one might ask for an affine flat which approximates a given set in a minimax sense. Along with the description of the \emph{size} of the given set via sections of the unit ball, this is the setting for the theory of \emph{successive radii}.
Following the footsteps of classical convex geometry, this theory includes generalizations of the famous inequalities of Jung \cite{Jung1901,Henk1992,BoltyanskiMa2006} and Steinhagen \cite{Steinhagen1922,BetkeHe1993a} (which are upper bounds for the circumradius in terms of the diameter, and for the minimum width in terms of the inradius, respectively).
But it has also connections to the notions of mixed volume and lattices \cite{HenkHe2008b,HenkHe2009} as well as the behaviour under Minkowski addition of input sets \cite{GonzalezHe2012,Gonzalez2013}. Besides the Euclidean theory, some knowledge about successive radii was also gained in normed spaces \cite{BarontiCaPa1997,BarontiPa1988,BarontiPa1994,GritzmannKl1992,GritzmannKl1993}.

But the metrical description of a set with respect to a fixed \enquote{unit ball} is more far-reaching than just assigning several numbers. The notions of constant width and diametrical maximality (also known as completeness) still form an active field of research within the theory of normed spaces. It is well-known that these notions are intimately related to special intersections of translates of the unit ball which appear under several names in the literature, such as \emph{wide spherical hull} and \emph{tight spherical hull} \cite{MorenoSc2012a,MorenoSc2012c}, or \emph{ball intersection} and \emph{ball hull} \cite{MartinMaSp2014,MartiniRiSp2013}.

In the present paper, we will extend the notions of successive radii, ball hulls, and ball intersections from normed spaces to generalized Minkowski spaces whose unit balls are not necessarily centered at the origin (but still convex). For Minkowski functionals, which serve as the corresponding analogues of norms, also the notions of \emph{gauges} or \emph{convex distance functions} are common.

The purpose of the present paper is twofold. In Section~\ref{chap:successive}, we investigate elementary properties of successive radii in generalized Minkowski spaces. This combines the relaxation of the geometric configuration with the relaxation of distance measurement, where the latter was already initiated in \cite{Jahn2015b}. Section~\ref{chap:translates} is devoted to ball hulls and ball intersections in generalized Minkowski spaces. Our presentation starts with recalling our notation in Section~\ref{chap:preliminaries}, and it ends with a discussion of possible future research in Section~\ref{chap:conclusions}.

\section{Preliminaries}\label{chap:preliminaries}
Throughout this paper, we shall be concerned with the vector space $\RR^d$, with the topology generated by the usual inner product $\skpr{\cdot}{\cdot}$ and the norm $\norm{\cdot}=\sqrt{\skpr{\cdot}{\cdot}}$ or, equivalently, its unit ball $B$. For the \emph{extended real line} we write $\cRR\defeq\RR\cup\setn{+\infty,-\infty}$, with the conventions $0(+\infty)\defeq +\infty$, $0(-\infty)\defeq 0$ and $(+\infty)+(-\infty)\defeq +\infty$. We use the notation $\KK^d_0$ for the class of convex compact sets having non-empty interior. The \emph{line segment} between $x$ and $y$ and the \emph{straight line} through $x$ and $y$ shall be denoted by $\clseg{x}{y}$ and $\strline{x}{y}$, respectively. The abbreviations $\dim$, $\inte$, $\cl$, and $\co$ stand for \emph{dimension}, \emph{interior}, \emph{closure}, and \emph{convex hull}, respectively. A set $K$ is said to be \emph{centrally symmetric} if there is a point $z\in\RR^d$ such that $K=2z-K$, and it is \emph{centered} iff $K=-K$.
The \emph{circumradius} and the \emph{inradius} of $K\subset\RR^d$ with respect to a set $C\in\KK^d_0$ are defined as
\begin{equation*}
R(K,C)=\inf_{x\in\RR^d}\inf\setcond{\lambda>0}{K\subset x+\lambda C}
\end{equation*}
and
\begin{equation*}
r(K,C)=\sup_{x\in\RR^d}\sup\setcond{\lambda\geq 0}{x+\lambda C \subset K},
\end{equation*}
respectively. For the sake of simplicity, we always assume $0\in\inte(C)$. Then, using the Minkowski functional $\gamma_C$ of $C$ which is defined via $\gamma_C(x)=\inf\setcond{\lambda >0}{x\in \lambda C}$, we can write
\begin{align}
\inf\setcond{\lambda>0}{K\subset x+\lambda C}=\sup_{y\in K}\gamma_C(y-x).\label{eq:inclusion_vs_optimization}
\end{align}

\section{Successive radii}\label{chap:successive}
The study of successive radii in Euclidean space, which goes back to the late 1970s \cite{Pukhov1979}, deals with circumradii and inradii (with respect to $C=B$) of sections of $K$ by, and projections of $K$ onto, affine subspaces of specified dimension. This research direction has been continued in \cite{Perelman1987,Henk1992,BetkeHe1992,BetkeHe1993a}. For normed spaces, Gritzmann and Klee \cite{GritzmannKl1992,GritzmannKl1993} already realized that it makes things easier if one replaces the procedure of projecting $K$ onto linear subspaces of dimension $j$ by considering cylindrical unit balls, \dah the Minkowski sum of $C$ and a linear subspace of dimension $d-j$.
There is no difference between these points of view in the Euclidean setting because sections of $B$ by, and projections of $B$ onto, parallel affine flats \enquote{all look the same}, \dah they are homothets of each other. Therefore, the intimate relationship between Euclidean distance measurement and the usual topology carries over to affine flats. Finding the appropriate definitions for $C\neq B$ is therefore sometimes difficult, see Remark~\ref{bem:projected_balls}. In the last six years, relationships between successive radii, Brunn--Minkowski theory, and discrete geometry were established; see \cite{HenkHe2008b,HenkHe2009}. But recent research \cite{GonzalezHe2012,Gonzalez2013} also ties in with the work of the early 1990s by introducing new quantities.
This is done via interchanging infima and suprema (over a family of linear subspaces), circumradii and inradii, and sections and cylinders. In the present paper, the symbols for these quantities are chosen in a way that clarifies which choices were made and what dimension the participating linear subspaces have. In all cases, we cite several papers which contain the classical definitions or the corresponding results within the classical setting.

We start with the most basic notions which are called \emph{Kolmogorov} and \emph{Bernstein diameters} in \cite{Pukhov1979}, \emph{external} and \emph{internal radii} in \cite{Perelman1987}, and \emph{inner} and \emph{outer $j$-radii} in \cite{GritzmannKl1992,GritzmannKl1993}. From now on, we generally assume that $K\in\RR^d$, $C\in\KK^d_0$, and $0\in\inte(C)$.
\begin{Def}\label{def:inner_outer}
Let $j\in\setn{1,\ldots,d}$, and define the \emph{distance function of $A$ with respect to $C$} via $\dist_C(\cdot,A):\RR^d\to\RR$, $\dist_C(y,A)\defeq \inf\setcond{\gamma_C(y-z)}{z\in A}$. Furthermore, define
\begin{enumerate}[label={(\alph*)},align=left]
\item{
$\begin{aligned}[t]R^\pi_j(K,C)&\defeq \inf\setcond{\lambda \geq 0}{\ex L\in \LL^d_{d-j} \ex x\in \RR^d: K\subset x+L+\lambda C}\\
&=\inf_{L\in\LL^d_{d-j}}\inf_{x\in\RR^d} \inf\setcond{\lambda \geq 0}{K\subset x+L+\lambda C}\\
&=\inf_{L\in\LL^d_{d-j}}\inf_{x\in\RR^d} \sup_{y\in K}\dist_C(y,x+L)\\
&=\inf_{L\in\LL^d_{d-j}}R(K,C+L),
\end{aligned}$
}
\item{
$\begin{aligned}[t]r_\sigma^j(K,C)&\defeq\sup_{L\in \LL^d_j}\inf_{x\in\RR^d}\sup_{y\in\RR^d}\sup\setcond{\lambda \geq 0}{y+\lambda(C\cap (x+L))\subset K}\\
&=\sup_{L\in \LL^d_j}\inf_{x\in\RR^d}r(K,C\cap(x+L)).
\end{aligned}$
}
\end{enumerate}
\end{Def}

\begin{Bem}\label{bem:projected_balls}
For $C=B$, we even have
\begin{equation*}
R^\pi_j(K,C)=\inf_{L\in \LL_j^d} R(\Proj_L(K),C)=\inf_{L\in \LL_j^d} R(\Proj_L(K),\Proj_L(C)),
\end{equation*}
see \cite[Definition~3.1, Remark~3.2]{BrandenbergKoe2013}. For $C\neq B$, outer radii of projections onto linear subspaces are not the same as outer radii of projections with respect to projections of $C$. Here the \emph{projections of $x\in\RR^d$ and $K$ onto $L$} are denoted by
\begin{align*}
\Proj_L(x)\defeq \setcond{y\in\RR^n}{\norm{x-y}=\inf_{z\in L}\norm{x-z}}
\end{align*}
and $\Proj_L(K)=\bigcup_{x\in K}\Proj_L(x)$, respectively.

Now take $d=3$, $L=\setcond{(x_1,x_2,x_3)\in\RR^2}{x_3=0}$, and
\begin{align*}
C&=\setcond{(x_1+2\lambda-1,x_2+2\lambda-1,2\lambda-1)}{x_1,x_2\in\RR, x_1^2+x_2^2\leq 1, \lambda \in \clseg{0}{1}}\\
&=\clseg{(1,0,1)}{(-1,0,-1)}+\setconds{(x_1,x_2,0)\in\RR^3}{x_1^2+x_2^2\leq 1}.
\end{align*} 
\begin{figure}[h!]
\begin{center}
\begin{tikzpicture}[line cap=round,line join=round,>=stealth]
\pgfsetyvec{\pgfpoint{-0.5cm}{-0.5cm}}
\pgfsetxvec{\pgfpoint{1cm}{0cm}}
\pgfsetzvec{\pgfpoint{0cm}{1cm}} 
\draw plot[shift={(1,0,1)},domain=0:2*pi,smooth,variable=\t]({cos(\t r)},{sin(\t r)},0);
\draw [dashed] plot[shift={(0,0,0)},domain=0:2*pi,smooth,variable=\t]({cos(\t r)},{sin(\t r)},0);
\draw [dashed] plot[shift={(-1,0,-1)},domain=pi:2*pi,smooth,variable=\t]({cos(\t r)},{sin(\t r)},0);
\draw plot[shift={(-1,0,-1)},domain=0:pi,smooth,variable=\t]({cos(\t r)},{sin(\t r)},0);
\draw [color=black] plot[shift={(-1,0,0)},domain=0.5*pi:1.5*pi,smooth,variable=\t]({cos(\t r)},{sin(\t r)},0);
\draw [color=black] plot[shift={(1,0,0)},domain=1.5*pi:2.5*pi,smooth,variable=\t]({cos(\t r)},{sin(\t r)},0);
\draw [color=black] (-1,1,0)--(1,1,0);
\draw [color=black,dashed] (-1,-1,0)--(1,-1,0);
\draw (-2,0,-1)--(-1.5,0,-0.5);
\draw (0,0,-1)--(0.5,0,-0.5);
\draw [color=black,dashed] (-1.5,0,-0.5)--(-1,0,0);
\draw [color=black,dashed] (0.5,0,-0.5)--(1,0,0);
\draw (-1,0,0)--(0,0,1);
\draw (1,0,0)--(2,0,1);
\draw (2,0,0) node[right]{$K$};
\draw (2,0,1) node[right]{$C$};
\end{tikzpicture}
\end{center}\caption{Projections of balls.}
\end{figure}
If $K=\Proj_L(C)$, then $R(K,C)=2$ but $R(K,\Proj_L(C))=1$.
\end{Bem}
All the quantities listed below appear in \cite{Gonzalez2013,BetkeHe1992}. The paper \cite{HenkHe2008b} also contains $R_\pi^j$ and $r^\sigma_j$. The quantities $R_\pi^j$ and $R_\sigma^j$ are investigated in \cite{BrandenbergKoe2013}. Finally, $r_\pi^j$ appears in \cite{GonzalezHeHi2015}.
\begin{Def}\label{def:successive_radii}
Let $j\in\setn{1,\ldots,d}$. Define
\begin{enumerate}[label={(\alph*)},align=left]
\item{$\begin{aligned}[t]
R_\pi^j(K,C)&\defeq\sup_{L\in \LL^d_{d-j}} R(K,C+L)\\
&=\sup_{L\in \LL^d_{d-j}} \inf\setcond{\lambda\geq 0}{\ex x\in X: K\subset x+L+\lambda C}\\
&=\sup_{L\in \LL^d_{d-j}} \inf_{x\in\RR^d}\inf\setcond{\lambda\geq 0}{K\subset x+L+\lambda C}\\
&=\sup_{L\in\LL^d_{d-j}}\inf_{x\in\RR^d}\sup_{y\in K}\dist_C(y,x+L),
\end{aligned}$}
\item{$R_\sigma^j(K,C)\defeq \sup_{L\in \LL^d_j}\sup_{x\in\RR^d} R(K\cap (x+L),C)$,}
\item{$R^\sigma_j(K,C)\defeq \inf_{L\in \LL^d_j}\sup_{x\in\RR^d} R(K\cap (x+L),C)$,}
\item{$r_\pi^j(K,C)\defeq \sup_{L\in \LL^d_{d-j}} r(K+L,C)$,}
\item{$r^\pi_j(K,C)\defeq \inf_{L\in \LL^d_{d-j}} r(K+L,C)$,}
\item{$\begin{aligned}[t]r^\sigma_j(K,C)&\defeq \inf_{L\in \LL^d_j}\inf_{x\in\RR^d}\sup_{y\in\RR^d}\sup\setcond{\lambda \geq 0}{y+\lambda(C\cap (x+L))\subset K}\\
&=\inf_{L\in \LL^d_j}\inf_{x\in\RR^d}r(K,C\cap(x+L)).\end{aligned}$}
\end{enumerate}
\end{Def}
Next we establish the monotonicity of $R_\pi^j$ and $R^\pi_j$ with respect to the dimension index $j$. In classical settings, these results are stated in \cite{HenkHe2008b,Gonzalez2013}, the first one also in \cite{HenkHe2009,GonzalezHeHi2015,GonzalezHe2012,GritzmannKl1992,Tikhomirov1960,Perelman1987}, and the second one also in \cite{BetkeHe1992,BrandenbergKoe2013}.
\begin{Satz}
We have $R_\pi^1(K,C)\leq\ldots\leq R_\pi^d(K,C)$ and $R^\pi_1(K,C)\leq\ldots\leq R^\pi_d(K,C)$.
\end{Satz}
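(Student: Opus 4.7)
The plan rests on a simple monotonicity observation about $R(K,\cdot)$: if $D\subset D'$ are convex sets (not necessarily bounded) with $0$ in their interior, then any inclusion $K\subset x+\lambda D$ automatically gives $K\subset x+\lambda D'$, so $\inf\setcond{\lambda\geq 0}{K\subset x+\lambda D'}\leq \inf\setcond{\lambda\geq 0}{K\subset x+\lambda D}$ for every $x\in\RR^d$. Taking the infimum over $x$ and applying this to $D=C+L$, $D'=C+L'$ for linear subspaces $L\subset L'$ (using $\lambda L=L$ for $\lambda>0$) yields the key implication
$$L\subset L' \quad\Rightarrow\quad R(K,C+L')\leq R(K,C+L).$$

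For the first chain, fix $j\in\setn{2,\ldots,d}$ and take an arbitrary $L\in\LL^d_{d-j+1}$. Since $\dim L=d-j+1>d-j$, there exists a subspace $L'\subset L$ with $L'\in\LL^d_{d-j}$. The key implication then gives
$$R(K,C+L)\leq R(K,C+L')\leq \sup_{L''\in\LL^d_{d-j}} R(K,C+L'')=R_\pi^j(K,C),$$
and taking the supremum over $L\in\LL^d_{d-j+1}$ on the left produces $R_\pi^{j-1}(K,C)\leq R_\pi^j(K,C)$.

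For the second chain, fix $j\in\setn{2,\ldots,d}$ and take any $L'\in\LL^d_{d-j}$. Extend $L'$ to some $L\in\LL^d_{d-j+1}$ containing $L'$; then
$$R^\pi_{j-1}(K,C)=\inf_{L''\in\LL^d_{d-j+1}} R(K,C+L'')\leq R(K,C+L)\leq R(K,C+L'),$$
and taking the infimum over $L'\in\LL^d_{d-j}$ on the right yields $R^\pi_{j-1}(K,C)\leq R^\pi_j(K,C)$.

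There is no substantial obstacle; the only thing to watch is the direction of the monotonicity (enlarging the underlying gauge shrinks the circumradius). As $j$ grows, the subspaces in the defining family shrink in dimension, so the cylinder gauges $C+L$ shrink and their circumradii grow. Choosing $L'\subset L$ in the supremum case and $L\supset L'$ in the infimum case then transfers the inequality from one dimension index to the next in the correct direction, and iteration yields the full chains.
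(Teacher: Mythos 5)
Your proof is correct and follows essentially the same route as the paper: both rest on the observation that $L'\subset L$ forces $R(K,C+L)\leq R(K,C+L')$, and then transfer this inequality through the supremum (by choosing a nested subspace of one dimension lower) and through the infimum (by extending to a subspace of one dimension higher). The only cosmetic difference is that you work directly with $R(K,C+L)$ and compare an arbitrary member of one family with a nested member of the other, while the paper inserts an intermediate $\inf$/$\sup$ over the nested subfamily; the substance is identical.
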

\begin{proof}
Fix $j \in\setn{1,\ldots,d-1}$. Let $L\in\LL^d_{d-j}$ and $L^\prime\in \LL^d_{d-j-1}$ be such that $L^\prime \subset L$. For all $x\in\RR^d$, we obviously have
\begin{equation*}
\inf\setcond{\lambda\geq 0}{K\subset x+L+\lambda C}\leq \inf\setcond{\lambda\geq 0}{K\subset x+L^\prime+\lambda C}.
\end{equation*}
Since $x\in\RR^d$ is arbitrary, we obtain
\begin{equation*}
\inf_{x\in\RR^d}\inf\setcond{\lambda\geq 0}{K\subset x+L+\lambda C}\leq \inf_{x\in\RR^d}\inf\setcond{\lambda\geq 0}{K\subset x+L^\prime+\lambda C}.
\end{equation*}
It follows that
\begin{align*}
\inf_{x\in\RR^d}\inf\setcond{\lambda\geq 0}{K\subset x+L+\lambda C}&\leq \inf_{\substack{L^\prime\in\LL^d_{d-j-1}\\L^\prime\subset L}} \inf_{x\in\RR^d}\inf\setcond{\lambda\geq 0}{K\subset x+L^\prime+\lambda C}\\
&\leq\sup_{\substack{L^\prime\in\LL^d_{d-j-1}\\L^\prime\subset L}} \inf_{x\in\RR^d}\inf\setcond{\lambda\geq 0}{K\subset x+L^\prime+\lambda C}.
\end{align*}
Consequently,
\begin{align*}
R_\pi^j(K,C)&=\sup_{L\in\LL^d_{d-j}}\inf_{x\in\RR^d}\inf\setcond{\lambda\geq 0}{K\subset x+L+\lambda C}\\
&\leq \sup_{L\in\LL^d_{d-j}}\inf_{\substack{L^\prime\in\LL^d_{d-j-1}\\L^\prime\subset L}}\inf_{x\in\RR^d}\inf\setcond{\lambda\geq 0}{K\subset x+L^\prime+\lambda C}\\
&\leq\sup_{L\in\LL^d_{d-j}}\sup_{\substack{L^\prime\in\LL^d_{d-j-1}\\L^\prime\subset L}} \inf_{x\in\RR^d}\inf\setcond{\lambda\geq 0}{K\subset x+L^\prime+\lambda C}\\
&=R_\pi^{j+1}(K,C)
\end{align*}
and
\begin{align*}
R^\pi_j(K,C)&=\inf_{L\in\LL^d_{d-j}}\inf_{x\in\RR^d}\inf\setcond{\lambda\geq 0}{K\subset x+L+\lambda C}\\
&\leq \inf_{L\in\LL^d_{d-j}}\inf_{\substack{L^\prime\in\LL^d_{d-j-1}\\L^\prime\subset L}}\inf_{x\in\RR^d}\inf\setcond{\lambda\geq 0}{K\subset x+L^\prime+\lambda C}\\
&=R^\pi_{j+1}(K,C).
\end{align*}
We obtain $R_\pi^j(K,C)\leq R_\pi^{j+1}(K,C)$ and $R^\pi_j(K,C)\leq R^\pi_{j+1}(K,C)$.
\end{proof}
In \cite{Gonzalez2013,BetkeHe1992}, we also find the monotonicity of $R^\sigma_j$ and $R_\sigma^j$ with respect to $j$.
\begin{Satz}
We have $R_\sigma^1(K,C)\leq \ldots\leq R_\sigma^d(K,C)$ and $R^\sigma_1(K,C)\leq\ldots\leq R^\sigma_d(K,C)$.
\end{Satz}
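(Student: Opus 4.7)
The plan is to reduce both inequalities to the trivial monotonicity of the circumradius under set inclusion, namely $A\subset A'\Rightarrow R(A,C)\leq R(A',C)$, which is immediate from $R(A,C)=\inf_x\inf\setcond{\lambda\geq 0}{A\subset x+\lambda C}$: any $(x,\lambda)$ that works for $A'$ also works for $A$.

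Fix $j\in\setn{1,\ldots,d-1}$. The key observation is that whenever $L\in\LL^d_j$, $L^\prime\in\LL^d_{j+1}$ satisfy $L\subset L^\prime$ and $x\in\RR^d$, we have $x+L\subset x+L^\prime$, hence $K\cap(x+L)\subset K\cap(x+L^\prime)$, and therefore
\begin{equation*}
R(K\cap(x+L),C)\leq R(K\cap(x+L^\prime),C)
\end{equation*}
by the monotonicity noted above.

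For $R_\sigma^j$ I would argue as follows. Given any $L\in\LL^d_j$, choose some $L^\prime\in\LL^d_{j+1}$ with $L\subset L^\prime$ (such a subspace always exists for $j<d$). For every $x\in\RR^d$, the preceding inequality gives $R(K\cap(x+L),C)\leq R(K\cap(x+L^\prime),C)\leq \sup_{x^\prime\in\RR^d}R(K\cap(x^\prime+L^\prime),C)\leq R_\sigma^{j+1}(K,C)$. Taking the supremum first over $x$ and then over $L$ yields $R_\sigma^j(K,C)\leq R_\sigma^{j+1}(K,C)$.

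For $R^\sigma_j$ the quantifier order is reversed, so I would fix $L^\prime\in\LL^d_{j+1}$ arbitrarily and pick any $L\in\LL^d_j$ with $L\subset L^\prime$. Then for every $x\in\RR^d$ the basic inclusion yields $R(K\cap(x+L),C)\leq R(K\cap(x+L^\prime),C)$, hence $\sup_{x\in\RR^d}R(K\cap(x+L),C)\leq\sup_{x\in\RR^d}R(K\cap(x+L^\prime),C)$. Since the $L$ on the left is one particular $j$-dimensional subspace, the infimum over $\LL^d_j$ is no larger, so $\inf_{L\in\LL^d_j}\sup_{x\in\RR^d}R(K\cap(x+L),C)\leq\sup_{x\in\RR^d}R(K\cap(x+L^\prime),C)$. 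Finally, taking the infimum over $L^\prime\in\LL^d_{j+1}$ on the right gives $R^\sigma_j(K,C)\leq R^\sigma_{j+1}(K,C)$.

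No real obstacle is expected; the only point requiring mild care is the $R^\sigma$ step, where one must select the smaller subspace $L$ inside the given $L^\prime$ (rather than the other way round) in order to push the inequality through the $\inf$--$\sup$ ordering. This parallels exactly the treatment of $R^\pi_j$ and $R_\pi^j$ in the preceding theorem.
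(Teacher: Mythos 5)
Your proposal is correct and follows essentially the same route as the paper: both rest on the inclusion $K\cap(x+L)\subset K\cap(x+L^\prime)$ for $L\subset L^\prime$ together with the monotonicity of $R(\cdot,C)$ under set inclusion, and then push this through the respective $\sup$/$\inf$ orderings over subspaces. Your handling of the quantifiers (in particular taking the supremum over $x$ explicitly, and choosing $L$ inside a given $L^\prime$ for the $R^\sigma_j$ case) is if anything a bit cleaner than the paper's write-up, but it is not a different argument.
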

\begin{proof}
Fix $j \in\setn{1,\ldots,d-1}$. Let $L\in\LL^d_j$ and $L^\prime\in \LL^d_{j+1}$ be such that $L\subset L^\prime$. For all $x\in\RR^d$, we obviously have $K\cap (x+L)\subset K\cap (x+L^\prime)$, and hence
\begin{equation*}
\inf_{y\in\RR^d}\inf\setcond{\lambda\geq 0}{K\cap(x+L)\subset y+\lambda C}\leq \inf_{y\in\RR^d}\inf\setcond{\lambda\geq 0}{K\cap(x+L^\prime)\subset y+\lambda C}.
\end{equation*}
It follows that
\begin{align*}
\inf_{y\in\RR^d}\inf\setcond{\lambda\geq 0}{K\cap(x+L)\subset y+\lambda C}&\leq \inf_{\substack{L^\prime\in\LL^d_{j+1}\\L\subset L^\prime}} \inf_{y\in\RR^d}\inf\setcond{\lambda\geq 0}{K\cap(x+L^\prime)\subset y+\lambda C}\\
&\leq\sup_{\substack{L^\prime\in\LL^d_{j+1}\\L\subset L^\prime}}\inf_{y\in\RR^d}\inf\setcond{\lambda\geq 0}{K\cap(x+L^\prime)\subset y+\lambda C}.
\end{align*}
Consequently,
\begin{align*}
R_\sigma^j(K,C)&=\sup_{L\in\LL^d_{d-j}}\inf_{y\in\RR^d}\inf\setcond{\lambda\geq 0}{K\cap(x+L)\subset y+\lambda C}\\
&\leq \sup_{L\in\LL^d_j}\inf_{\substack{L^\prime\in\LL^d_{j+1}\\L\subset L^\prime}}\inf_{y\in\RR^d}\inf\setcond{\lambda\geq 0}{K\cap(x+L^\prime)\subset y+\lambda C}\\
&\leq\sup_{L\in\LL^d_j}\sup_{\substack{L^\prime\in\LL^d_{j+1}\\L\subset L^\prime}}\inf_{y\in\RR^d}\inf\setcond{\lambda\geq 0}{K\cap(x+L^\prime)\subset y+\lambda C}\\
&=R_\sigma^{j+1}(K,C)
\end{align*}
and
\begin{align*}
R^\sigma_j(K,C)&=\inf_{L\in\LL^d_j}\inf_{y\in\RR^d}\inf\setcond{\lambda\geq 0}{K\cap(x+L)\subset y+\lambda C}\\
&\leq \inf_{L\in\LL^d_j}\inf_{\substack{L^\prime\in\LL^d_{j+1}\\L\subset L^\prime}}\inf_{y\in\RR^d}\inf\setcond{\lambda\geq 0}{K\cap(x+L^\prime)\subset y+\lambda C}\\
&=R^\sigma_{j+1}(K,C).
\end{align*}
This completes the proof.
\end{proof}
Let us now state the monotonicity of the inradii counterparts. See again \cite{Gonzalez2013,BetkeHe1992} for the corresponding classical results, but also \cite{HenkHe2008b,HenkHe2009,GonzalezHeHi2015,GonzalezHe2012,GritzmannKl1992,Perelman1987}.
\begin{Satz}
We have $r_\pi^1(K,C)\geq\ldots\geq r_\pi^d$, $r^\pi_1(K,C)\geq\ldots\geq r^\pi_d(K,C)$, $r^\sigma_1\geq\ldots\geq\ r^\sigma_d$, and $r_\sigma^1(K,C)\geq \ldots\geq r_\sigma^d(K,C)$.
\end{Satz}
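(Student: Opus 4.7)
The plan is to derive all four monotonicity chains from two elementary properties of the inradius that are immediate from $r(A,D)=\sup\setcond{\lambda\geq 0}{\ex y\in\RR^d:\: y+\lambda D\subset A}$: (i) if $A\subset A'$ then $r(A,D)\leq r(A',D)$, since any gauge-ball fitting into $A$ also fits into $A'$; and (ii) if $D\subset D'$ then $r(A,D)\geq r(A,D')$, since $y+\lambda D\subset y+\lambda D'\subset A$ whenever $y+\lambda D'\subset A$. Together with the obvious facts that every $L\in\LL^d_m$ is contained in some $L'\in\LL^d_{m+1}$ and contains some $L''\in\LL^d_{m-1}$, this is all the material required, in complete analogy to the two preceding theorems on $R$-type radii.

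For the $\pi$-type quantities I would fix $j\in\setn{1,\ldots,d-1}$ and pass to nested subspaces $L'\subset L$ with $L\in\LL^d_{d-j}$, $L'\in\LL^d_{d-j-1}$. Since $K+L'\subset K+L$, property (i) gives $r(K+L',C)\leq r(K+L,C)$. For $r_\pi^j$ (a supremum over $L$), every $L'\in\LL^d_{d-j-1}$ embeds in some such $L$, so $r(K+L',C)\leq r_\pi^j(K,C)$, and a supremum over $L'$ produces $r_\pi^{j+1}(K,C)\leq r_\pi^j(K,C)$. For $r^\pi_j$ (an infimum over $L$), for each $L$ one picks any $L'\subset L$ of dimension $d-j-1$ and obtains $r^\pi_{j+1}(K,C)\leq r(K+L',C)\leq r(K+L,C)$; taking the infimum over $L$ then gives $r^\pi_{j+1}(K,C)\leq r^\pi_j(K,C)$.

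For the $\sigma$-type quantities I would instead take nested $L\subset L'$ with $L\in\LL^d_j$ and $L'\in\LL^d_{j+1}$. Then $C\cap(x+L)\subset C\cap(x+L')$ for every $x\in\RR^d$, and property (ii) yields the pointwise inequality $r(K,C\cap(x+L))\geq r(K,C\cap(x+L'))$, which is preserved by an infimum over $x$. For $r^\sigma_j$ (an infimum over $L$), each $L$ extends to some $L'$, and the infimum over $L$ yields $r^\sigma_j(K,C)\geq r^\sigma_{j+1}(K,C)$. For $r_\sigma^j$ (a supremum over $L$), each $L'$ contains some $L$, so $r_\sigma^j(K,C)\geq \inf_x r(K,C\cap(x+L))\geq \inf_x r(K,C\cap(x+L'))$ for every $L'$, and a supremum over $L'$ finishes the argument.

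There is no genuine obstacle; the whole proof amounts to bookkeeping of which direction of subspace inclusion matches which monotonicity of $r$. The key asymmetry to track is that the $\pi$-family, where $L$ appears inside a Minkowski sum and hence affects the \emph{outer} set in $r$, uses property (i) with $L'\subset L$; whereas the $\sigma$-family, where $L$ appears inside an intersection describing the \emph{shape} of the inscribed ball, uses property (ii) with $L\subset L'$.
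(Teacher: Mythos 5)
Your argument is correct in all four chains, and for the $\sigma$-type quantities it coincides with the paper's proof: both rest on the observation that $L\subset L'$ gives $C\cap(x+L)\subset C\cap(x+L')$, hence $r(K,C\cap(x+L))\geq r(K,C\cap(x+L'))$ pointwise in $x$, after which the $\inf$/$\sup$ bookkeeping over subspaces is identical. For the $\pi$-type quantities, however, you take a genuinely different route. The paper does not argue directly with $r(K+L,C)$ at all; it invokes the inversion identity $r(A,C)=R(C,A)^{-1}$ to get $r_\pi^j(K,C)=R^\pi_j(C,K)^{-1}$ and $r^\pi_j(K,C)=R_\pi^j(C,K)^{-1}$, and then reads off the decreasing monotonicity from the already-proven increasing monotonicity of $R^\pi_j$ and $R_\pi^j$ in $j$ (with the arguments swapped). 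Your direct argument --- $K+L'\subset K+L$ for $L'\subset L$, combined with monotonicity of $r(\cdot,C)$ under set inclusion in the first argument --- is more elementary and self-contained, and it does not depend on the earlier theorem or on the duality between inradius and circumradius; the paper's route is shorter on the page and has the side benefit of recording the useful identities $r_\pi^j(K,C)=R^\pi_j(C,K)^{-1}$ and $r^\pi_j(K,C)=R_\pi^j(C,K)^{-1}$ explicitly. Both are valid; there is no gap in your version.
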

\begin{proof}
Observe that
\begin{align*}
r_\pi^j(K,C)&= \sup_{L\in \LL^d_{d-j}} r(K+L,C)=\sup_{L\in \LL^d_{d-j}} R(C,K+L)^{-1}\\
&=\lr{\inf_{L\in \LL^d_{d-j}} R(C,K+L)}^{-1}=R^\pi_j(C,K)^{-1},\\
r^\pi_j(K,C)&= \inf_{L\in \LL^d_{d-j}} r(K+L,C)=\inf_{L\in \LL^d_{d-j}} R(C,K+L)^{-1}\\
&=\lr{\sup_{L\in \LL^d_{d-j}} R(C,K+L)}^{-1}=R_\pi^j(C,K)^{-1}
\end{align*}
for all $j\in\setn{1,\ldots,d}$. This yields the first and the second claim. Now fix $j\in\setn{1,\ldots,d-1}$. Let $L\in \LL^d_j$ and $L^\prime\in\LL^d_{j+1}$ with $L\subset L^\prime$. For all $x,y\in\RR^d$ and all $\lambda\geq 0$, we have
\begin{align*}
y+\lambda((x+L)\cap C)\subset y+\lambda(C\cap (x+L^\prime)).
\end{align*}
Therefore, we obtain
\begin{equation*}
\inf_{x\in\RR^d}\sup_{y\in\RR^d}\sup\setcond{\lambda \geq 0}{y+\lambda(C\cap (x+L))}\geq \inf_{x\in\RR^d}\sup_{y\in\RR^d}\sup\setcond{\lambda \geq 0}{y+\lambda(C\cap (x+L^\prime))}.
\end{equation*}
It follows that
\begin{align*}
&\norel\inf_{x\in\RR^d}\sup_{y\in\RR^d}\sup\setcond{\lambda \geq 0}{y+\lambda(C\cap (x+L))}\\
&\geq\sup_{\substack{L^\prime\in\LL^d_{j+1}\\L\subset L^\prime}} \inf_{x\in\RR^d}\sup_{y\in\RR^d}\sup\setcond{\lambda \geq 0}{y+\lambda(C\cap (x+L^\prime))}\\
&\geq\inf_{\substack{L^\prime\in\LL^d_{j+1}\\L\subset L^\prime}} \inf_{x\in\RR^d}\sup_{y\in\RR^d}\sup\setcond{\lambda \geq 0}{y+\lambda(C\cap (x+L^\prime))}.
\end{align*}
Consequently,
\begin{align*}
r_\sigma^j(K,C)&=\sup_{L\in\LL^d_j}\inf_{x\in\RR^d}\sup_{y\in\RR^d}\sup\setcond{\lambda\geq 0}{y+\lambda (C\cap(x+L))\subset K}\\
&\geq \sup_{L\in\LL^d_j}\sup_{\substack{L^\prime\in\LL^d_{j+1}\\L\subset L^\prime}} \inf_{x\in\RR^d}\sup_{y\in\RR^d}\sup\setcond{\lambda \geq 0}{y+\lambda(C\cap (x+L^\prime))}\\
&=r_\sigma^{j+1}(K,C)
\end{align*}
and
\begin{align*}
r^\sigma_j(K,C)&=\inf_{L\in\LL^d_j}\inf_{x\in\RR^d}\sup_{y\in\RR^d}\sup\setcond{\lambda\geq 0}{y+\lambda (C\cap(x+L))\subset K}\\
&\geq \inf_{L\in\LL^d_j}\sup_{\substack{L^\prime\in\LL^d_{j+1}\\L\subset L^\prime}} \inf_{x\in\RR^d}\sup_{y\in\RR^d}\sup\setcond{\lambda \geq 0}{y+\lambda(C\cap (x+L^\prime))}\\
&\geq \inf_{L\in\LL^d_j}\inf_{\substack{L^\prime\in\LL^d_{j+1}\\L\subset L^\prime}} \inf_{x\in\RR^d}\sup_{y\in\RR^d}\sup\setcond{\lambda \geq 0}{y+\lambda(C\cap (x+L^\prime))}\\
&=r^\sigma_{j+1}(K,C),
\end{align*}
and the proof is complete.
\end{proof}
The next identities for the special cases $j\in\setn{1,d}$ follow directly from the definitions. Their classical counterparts are stated in every of the prementioned papers on successive radii, where the corresponding quantities appear.
\begin{Lem}
We have
\begin{align*}
R^\pi_d(K,C)=R_\pi^d(K,C)=R_\sigma^j(K,C)=R^\sigma_j(K,C)&=R(K,C),\\
r_\sigma^d(K,C)=r^\sigma_d(K,C)=r^\pi_d(K,C)=r_\pi^d(K,C)&=r(K,C),\\
R^\pi_1(K,C)=R^\sigma_1(K,C)=r^\pi_1(K,C)=r^\sigma_1(K,C)&=\frac{1}{2}\omega(K,C),\\
R_\pi^1(K,C)=R_\sigma^1(K,C)=r_\sigma^1(K,C)&=\frac{1}{2}D(K,C).
\end{align*}
\end{Lem}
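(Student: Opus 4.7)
The plan is to handle the two extremes $j=d$ and $j=1$ separately; in each case the proof is an unwinding of the definitions from Section~\ref{chap:successive}.

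For $j=d$ the argument is entirely formal. Since $\LL^d_{d-j}=\LL^d_0$ contains only the trivial subspace $\setn{0}$, every supremum or infimum over $L$ in the $\pi$-type quantities collapses; substituting $L=\setn{0}$ in Definitions~\ref{def:inner_outer}(a) and~\ref{def:successive_radii}(a,d,e) identifies $x+L+\lambda C$ with $x+\lambda C$ and $K+L$ with $K$, reducing $R_\pi^d,R^\pi_d$ to $R(K,C)$ and $r_\pi^d,r^\pi_d$ to $r(K,C)$. Similarly, $\LL^d_d=\setn{\RR^d}$ forces $L=\RR^d$ in the $\sigma$-type quantities, so $K\cap(x+L)=K$ and $C\cap(x+L)=C$ for every $x$, reducing $R_\sigma^d,R^\sigma_d$ to $R(K,C)$ and $r_\sigma^d,r^\sigma_d$ to $r(K,C)$.

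For $j=1$ the proof becomes a one-dimensional computation parametrised by a nonzero $u\in\RR^d$. A hyperplane $L\in\LL^d_{d-1}$ is of the form $u^\perp$; then $C+u^\perp$ is the slab $\setcond{y\in\RR^d}{-h_C(-u)\leq\skpr{u}{y}\leq h_C(u)}$, and $K+u^\perp$ has an analogous description. The condition that $K$ lies in a translate of $\lambda(C+u^\perp)$, or dually that a translate of $\lambda C$ lies inside $K+u^\perp$, reduces to two scalar inequalities on $\skpr{u}{x}$; balancing them by optimally choosing the translate yields in both cases
\begin{equation*}
R(K,C+u^\perp)=r(K+u^\perp,C)=\frac{h_K(u)+h_K(-u)}{h_C(u)+h_C(-u)}.
\end{equation*}
For a line $L=\RR u\in\LL^d_1$, the sets $K\cap(x+L)$ and $C\cap(x+L)$ are segments parallel to $u$, and after choosing the inner translates optimally the expressions $\sup_{x}R(K\cap(x+L),C)$ and $\inf_{x}r(K,C\cap(x+L))$ both reduce to the ratio of the longest chords of $K$ and $C$ in direction $u$. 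Taking the infimum over $u$ in each case yields $\tfrac{1}{2}\omega(K,C)$, while taking the supremum yields $\tfrac{1}{2}D(K,C)$, by the definitions of the minimum width and the diameter in a generalized Minkowski space.

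The only real obstacle is the bookkeeping required to verify, for each of the seven quantities listed in the $j=1$ part of the lemma, that the hyperplane- or line-based expression indeed collapses to the same one-dimensional ratio and then matches $\omega$ or $D$ according to whether an infimum or a supremum over directions is taken. No analytical difficulty arises; once this check is carried out, every identity follows directly from how $\omega(K,C)$ and $D(K,C)$ are defined, as the author indicates by remarking that the identities follow straight from the definitions.
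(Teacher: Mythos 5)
The paper offers no proof of this lemma at all --- it is stated as following ``directly from the definitions'' --- so your write-up is a reconstruction rather than a parallel of an existing argument. Your $j=d$ part is correct. The core computations for $j=1$ are also correct: indeed $R(K,C+u^\perp)=r(K+u^\perp,C)=\frac{h_{K-K}(u)}{h_{C-C}(u)}$, and for $L=\RR u$ the quantities $\sup_x R(K\cap(x+L),C)$ and $\inf_x r(K,C\cap(x+L))$ both equal the ratio of the longest chords of $K$ and $C$ in direction $u$. You do pass silently over the fact that the infimum (resp.\ supremum) over $u$ of the chord ratio agrees with the infimum (resp.\ supremum) over $u$ of the slab ratio --- both equal $\sup\setcond{\lambda\geq 0}{\lambda(C-C)\subset K-K}$, resp.\ $\inf\setcond{\lambda\geq 0}{K-K\subset\lambda(C-C)}$ --- but that step does hold.

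The genuine gap is the final step, where you declare that the supremum over directions equals $\frac12 D(K,C)$ and the infimum equals $\frac12\omega(K,C)$ ``by the definitions''. Neither you nor the paper states these definitions (they live in \cite{Jahn2015b}), and your own computation shows the identification cannot be waved through: since $R(K,C+u^\perp)=r(K+u^\perp,C)$ for every $u$, your argument gives $r_\pi^1(K,C)=R_\pi^1(K,C)=\sup_{u\neq 0}\frac{h_{K-K}(u)}{h_{C-C}(u)}$, so if $R_\pi^1=\frac12 D$ then also $r_\pi^1=\frac12 D$ --- which flatly contradicts the remark immediately following the lemma (and the open question in Section~\ref{chap:conclusions}) asserting that $r_\pi^1\neq\frac12 D$ in general. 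The point is that for a non-centered $C$ the candidates $\sup_{u\neq 0}\frac{h_{K-K}(u)}{h_{C-C}(u)}$ and $\frac12\sup_{x,y\in K}\gamma_C(x-y)=\sup_{u\neq 0}\frac{h_{K-K}(u)}{2h_C(u)}$ genuinely differ, so whether the last line of the lemma is a tautology or a false statement depends entirely on which of these is adopted as the definition of $D(K,C)$ for a gauge; the same issue arises for $\omega$. You must pin down those definitions and verify the match explicitly (and, in doing so, confront the apparent inconsistency between the lemma and the remark). As written, this step is unproved, and it is precisely where the asymmetry of $C$ --- the whole novelty of the paper --- enters.
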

In general, $r_\pi^1(K,C)=\sup\setcond{\frac{h_{K-K}(u)}{h_{C-C}(u)}}{u\in\RR^d\setminus\setn{0}}\neq \frac{1}{2}D(K,C)$, see \cite[Example~3.14(a)]{Jahn2015b}.

Finally, successive radii inherit the invariance under hull operations and translations as well as monotonicity with respect to set inclusion and compatibility with scaling.
\begin{Lem}
Let $K,K^\prime\subset \RR^d$, $C,C^\prime\in\KK^d_0$, and $\alpha,\beta>0$. If $f(K,C)$ is one of the quantities defined in Definitions~\ref{def:inner_outer} and \ref{def:successive_radii}, then the following identities are true:
\begin{enumerate}[label={(\alph*)},align=left]
\item{$f(K^\prime,C^\prime)\leq f(K,C)$ if $K^\prime \subset K$ and $C\subset C^\prime$,}
\item{$f(K,C)=f(\cl(K),C)=f(\co(K),C)$,}
\item{$f(x+K,y+C)=f(K,C)$ for all $x,y\in\RR^d$,}
\item{$f(\alpha K,\beta C)=\frac{\alpha}{\beta}f(K,C)$.}
\end{enumerate}
\end{Lem}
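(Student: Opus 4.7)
My plan is to exploit the uniform structure of the quantities in Definitions~\ref{def:inner_outer} and~\ref{def:successive_radii}: each of them is obtained by iterated suprema and infima, over a class of linear subspaces $L$ and sometimes translation parameters $x$, of one of four base quantities, namely $R(K,C+L)$, $R(K\cap(x+L),C)$, $r(K+L,C)$, or $r(K,C\cap(x+L))$. Since $\sup$ and $\inf$ preserve the inequalities, equalities, and positive homogeneities being asserted, I will verify (a)--(d) only at the level of these base quantities. That in turn reduces to the analogous statements for the plain circumradius $R(K,C)$ and inradius $r(K,C)$ from the preliminaries, provided one checks that the operations $C\mapsto C+L$, $K\mapsto K+L$, $K\mapsto K\cap(x+L)$, and $C\mapsto C\cap(x+L)$ are themselves compatible with set inclusion, translation, and positive scaling. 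This compatibility uses only that $\beta L=L$ for $\beta>0$ (since $L$ is a linear subspace), that translations commute with Minkowski sum and intersection as expected, and that both $+$ and $\cap$ are monotone in each argument.

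For $R$ and $r$ themselves, I will read properties (a), (c), (d) directly off the definitions. For (a), enlarging $K$ or shrinking $C$ enlarges the set of admissible $\lambda$ in the defining infimum of $R$, hence decreases the value (and dually for $r$). For (c), the translation of $K$ is absorbed by the infimum over $x$, while $C\mapsto y+C$ is handled via $x+\lambda(y+C)=(x+\lambda y)+\lambda C$ followed by the same absorption. For (d), the substitution $\mu=(\alpha/\beta)\lambda$ turns $\alpha K\subset x+\mu\beta C$ into $K\subset(x/\alpha)+\lambda C$, which produces the scaling factor $\alpha/\beta$.

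The closure and convex-hull invariance in (b) is the most substantive step. For all circumradius-type quantities it follows from the observation that $x+L+\lambda C$ is closed and convex, being the Minkowski sum of the closed subspace $L$ with the compact convex body $\lambda C$; hence the three containments $K\subset x+L+\lambda C$, $\cl(K)\subset x+L+\lambda C$, and $\co(K)\subset x+L+\lambda C$ are equivalent, and (b) transfers through all outer sups and infs. For $r_\pi^j$ and $r^\pi_j$ I plan to invoke the dualities $r_\pi^j(K,C)=R^\pi_j(C,K)^{-1}$ and $r^\pi_j(K,C)=R_\pi^j(C,K)^{-1}$ that already appear in the proof of the preceding monotonicity theorem. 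The main technical obstacle I anticipate is with $r_\sigma^j$ and $r^\sigma_j$, where $K$ plays the role of the outer set; there, showing that an inscribed homothet of $C\cap(x+L)$ in $\cl(K)$ or $\co(K)$ can be replicated inside $K$ itself is the only nontrivial point, and I would rely on the standing convexity and compactness assumptions on $K$ implicit in the successive-radii setting to close that gap.
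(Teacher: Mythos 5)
The paper states this lemma with no proof at all (it is presented as an immediate consequence of the definitions), so there is no argument of the author's to measure you against; your proposal is the only argument on the table. Your overall reduction --- peel off the iterated suprema and infima and verify everything for the four base quantities $R(K,C+L)$, $R(K\cap(x+L),C)$, $r(K+L,C)$, $r(K,C\cap(x+L))$ --- is the right skeleton, and your treatment of (a), (c), (d) is sound: monotonicity of the admissible-$\lambda$ sets, absorption of translations into the parameters $x$ and $L$, and $\beta L=L$ do all the work.

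The genuine gap is in (b). Your closed-convex-cylinder argument covers only $R^\pi_j$ and $R_\pi^j$, where $K$ sits inside the containment $K\subset x+L+\lambda C$; it says nothing about $R_\sigma^j$ and $R^\sigma_j$, where the base quantity is $R(K\cap(x+L),C)$ and replacing $K$ by $\cl(K)$ or $\co(K)$ changes the sections themselves --- in general $\co(K)\cap(x+L)\supsetneq\co(K\cap(x+L))$, so nothing \enquote{transfers through the outer sups and infs}. Moreover, the fallback you propose for $r_\sigma^j$ and $r^\sigma_j$ (invoke convexity and compactness of $K$) is both unavailable and circular: the lemma's hypothesis is only $K\subset\RR^d$, and statement (b) has content precisely when $K$ is not closed and convex. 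In fact, without such a hypothesis (b) is false for several of the listed quantities, so no argument can close this gap as the statement stands. For $K$ a set of $d+1$ affinely independent points, $r(K,C)=0<r(\co(K),C)$, which kills (b) for $r=r^\pi_d=r_\pi^d=r_\sigma^d=r^\sigma_d$; and for $K=\setn{(0,0),(1,0),(0,1)}\subset\RR^2$ every line in a generic direction meets $K$ in at most one point, so $R^\sigma_1(K,B)=0$ while $R^\sigma_1(\co(K),B)=\frac{1}{2}\omega(\co(K),B)>0$. Note also that your duality $r_\pi^j(K,C)=R^\pi_j(C,K)^{-1}$ moves $K$ into the gauge slot, so deducing (b) for $r_\pi^j$ would need hull-invariance of $R^\pi_j$ in its \emph{second} argument, which is not among (a)--(d) and likewise fails for non-convex $K$. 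The honest fix is to prove (b) only for $R^\pi_j$ and $R_\pi^j$ (where your argument is complete) and to impose $K$ compact and convex for the remaining quantities --- under which (b) is vacuous --- or to flag the missing hypothesis explicitly; whether $R_\sigma^j$ alone (both section and flat taken with suprema) remains hull-invariant for arbitrary $K$ is a nontrivial question your proposal does not answer.
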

\pagebreak
\section{Intersections of balls of a specific radius}\label{chap:translates}
In Minkowski geometry, \dah in the geometry of finite-dimensional real Banach spaces, there are two hull notions which occur as special intersections of balls. Namely, the \emph{ball hull} of a given set is the intersection of all balls that contain this set and have a specific radius, whereas the \emph{ball intersection} of a given set is the intersection of all balls with a specific radius and centers in this set (see \cite[Section~2]{Spirova2010a} and \cite{MartinMaSp2014}). Alternatively, the ball intersection of a given set can be seen as the set of centers of balls that have a specific radius and contain this set. This yields a link to minimax location problems (which are also known as center problems).
The level sets of these problems are ball intersections, and the optimal value is the circumradius. In our general setting, we replace the unit ball by an arbitrary convex set $C$ which need not to be centered. Therefore it is suitable to modify the definition of the ball intersection by taking the intersection of scaled copies of $-C$, not of $C$.
\begin{Def}\label{def:intersections}
Let $K\subset \RR^d$ be bounded, $C\in \KK^d_0$ with $0\in\inte(C)$, and let $\lambda\geq R(K,C)$.
The \emph{ball intersection} of $K$ (with respect to $C$) is defined as
\begin{equation*}
\bi(K,C,\lambda)\defeq \bigcap_{x\in K}(x-\lambda C).
\end{equation*}
The \emph{ball hull} (with respect to $C$) is defined as
\begin{equation*}
\bh(K,C,\lambda)\defeq \bigcap_{x\in\RR^d:\,K\subset x+\lambda C}(x+\lambda C).
\end{equation*}
\end{Def}
The restriction $\lambda\geq R(K,C)$ is a consequence of the observation 
\begin{align}
R(K,C)&=\inf\setcond{\lambda\geq 0}{\bigcap_{y\in K} (y-\lambda C)\neq \emptyset}\label{eq:circumradius_via_ball_intersection}
\end{align}
and the standard compactness argument for showing the existence of an optimal solution of the convex optimization problem 
\begin{equation}
\inf_{x\in\RR^d}\sup_{y\in K}\gamma_C(y-x),\label{eq:minimax}
\end{equation}
whose optimal value is $R(K,C)$; see again \eqref{eq:inclusion_vs_optimization}. Figure~\ref{fig:ball_hull} shows two examples of ball hulls for polygonal sets $K$ and $C$.

\begin{figure}[h!]
\begin{center}
\begin{tikzpicture}[line cap=round,line join=round,>=triangle 45,x=0.8cm,y=0.8cm]
\draw[color=black,shift={(0.6,1.7)}] (0,2)--(-1.9,0.62)--(-1.18,-1.62)--(1.18,-1.62)--(1.9,0.62)--cycle;
\draw[color=black] (3.4,3)--(3.9,2.11)--(5.46,2.59)--cycle;
\draw[color=black,dashed] (4.5,3.98)--(3.35,3.15)--(3.79,1.81)--(5.2,1.81)--(5.64,3.15)--cycle;
\draw[color=black,dashed] (4.42,3.74)--(3.28,2.91)--(3.71,1.57)--(5.12,1.57)--(5.56,2.91)--cycle;
\draw[color=black,line width=2pt] (3.79,1.81)--(5.2,1.81)--(5.56,2.91)--(4.42,3.74)--(3.4,3)--cycle;
\draw[shift={(0.6,0.7)}] (0,0) node{$C$};
\draw (4.1,2.5) node{$K$};
\draw[color=black,shift={(4.5,0)}] (3.4,3)--(3.9,2.11)--(5.46,2.59)--cycle;
\draw[color=black,dashed,shift={(4.5,0)}] (4.12,5.73)--(2.22,4.35)--(2.95,2.11)--(5.3,2.11)--(6.03,4.35)--cycle;
\draw[color=black,dashed,shift={(4.5,0)}] (4.87,5.73)--(2.97,4.35)--(3.69,2.11)--(6.04,2.11)--(6.77,4.35)--cycle;
\draw[color=black,dashed,shift={(4.5,0)}] (4.15,3.54)--(2.25,2.16)--(2.97,-0.08)--(5.32,-0.08)--(6.05,2.16)--cycle;
\draw[color=black,line width=2pt,shift={(4.5,0)}] (3.69,2.11)--(5.3,2.11)--(5.46,2.59)--(4.15,3.54)--(3.4,3)--cycle;
\draw[shift={(4.5,0)}] (4.1,2.5) node{$K^\prime$};
\end{tikzpicture}
\end{center}\caption{The ball hull $\bh(K,C,0.6)$ and $\bh(K^\prime,C,1)$ are shown in bold lines, where $K$ and $K^\prime$ are triangles, and $C$ is a regular pentagon.}\label{fig:ball_hull}
\end{figure}
\pagebreak
When combining the notions of ball hull and ball intersection, one obtains almost the same formulas as in normed spaces.
\begin{Satz}[{\cite[Proposition~2.1.1,~2.1.2]{Spirova2010a}}]
Let $K,K^\prime\subset\RR^d$, and $C\in\KK^d_0$ such that $0\in\inte(C)$.
\begin{enumerate}[label={(\alph*)},align=left]
\item{Let $\lambda\geq R(K^\prime,C)$ and $K\subset K^\prime$. We have $\bi(K,C,\lambda)\supset\bi(K^\prime,C,\lambda)$ and $\bh(K,C,\lambda)\subset \bh(K^\prime,C,\lambda)$.\label{intersections_set_monotone}}
\item{If $R(K,C)\leq\lambda\leq \lambda^\prime$, then $\bi(K,C,\lambda)\subset\bi(K,C,\lambda^\prime)$ and $\bh(K,C,\lambda)\supset \bh(K,C,\lambda^\prime)$.\label{intersections_radius_monotone}}
\item{If $\bh(K,C,\lambda)\neq\emptyset$ and $\bi(K,C,\lambda)\neq\emptyset$, then $\bh(K,C,\lambda)=\bi(\bi(K,C,\lambda),-C,\lambda)$ and $\bi(K,C,\lambda)=\bi(\bh(K,C,\lambda),C,\lambda)$.\label{intersections_compositions}}
\item{If $\sup\setcond{\gamma_C(x-y)}{x,y\in K}=\lambda$, then $\bh(K,C,\lambda)\subset \bi(K,-C,\lambda)$.}
\end{enumerate}
\end{Satz}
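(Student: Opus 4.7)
The plan is to dispatch the four parts in order; parts (a), (c), and (d) are manipulations of the intersection definitions, while the radius-monotonicity of $\bh$ (the ball hull half of (b)) is the only substantive step.

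For part (a), one observes which collection shrinks. Passing from $K'$ to $K\subset K'$ removes points from the index set of $\bi(\cdot,C,\lambda)$, yielding a larger intersection; simultaneously it enlarges the collection of admissible centers, $\setconds{x}{K\subset x+\lambda C}\supset\setconds{x}{K'\subset x+\lambda C}$, giving a smaller $\bh(K,C,\lambda)\subset\bh(K',C,\lambda)$. Part (b) for $\bi$ is immediate from $0\in C$: one has $\lambda C\subset\lambda'C$, hence $x-\lambda C\subset x-\lambda'C$ pointwise, and the intersection is monotone.

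The nontrivial step is $\bh(K,C,\lambda)\supset\bh(K,C,\lambda')$. Given $z\in\bh(K,C,\lambda')$ and an admissible center $x$ at radius $\lambda$ (so $K\subset x+\lambda C$), my plan is to slide to $x_{c'}\defeq x-(\lambda'-\lambda)c'$ for arbitrary $c'\in C$. The convex combination $\lambda c+(\lambda'-\lambda)c'=\lambda'(\tfrac{\lambda}{\lambda'}c+\tfrac{\lambda'-\lambda}{\lambda'}c')\in\lambda'C$ shows $\lambda C+(\lambda'-\lambda)c'\subset\lambda'C$, hence $K\subset x_{c'}+\lambda'C$. Therefore $z\in x_{c'}+\lambda'C$, and letting $c'$ range over $C$ yields $(z-x)+(\lambda'-\lambda)C\subset\lambda'C$. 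Taking support functions of both sides gives $\langle u,z-x\rangle\leq\lambda h_C(u)$ for every $u\in\RR^d$, whence $z-x\in\lambda C$ because $\lambda C$ equals the intersection of its supporting half-spaces.

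Part (c) rests on the tautology $x\in\bi(K,C,\lambda)\Longleftrightarrow K\subset x+\lambda C$, which directly rewrites $\bh(K,C,\lambda)=\bigcap_{x\in\bi(K,C,\lambda)}(x+\lambda C)=\bi(\bi(K,C,\lambda),-C,\lambda)$. For the second identity, one inclusion uses $K\subset\bh(K,C,\lambda)$, while the other uses that any $z\in\bi(K,C,\lambda)$ is itself an admissible center, so $z+\lambda C$ is one of the sets whose intersection defines $\bh(K,C,\lambda)$. Part (d) is the shortest: the diameter hypothesis makes every $x\in K$ itself an admissible center, hence $\bh(K,C,\lambda)\subset x+\lambda C$ for every $x\in K$, giving $\bh(K,C,\lambda)\subset\bigcap_{x\in K}(x+\lambda C)=\bi(K,-C,\lambda)$. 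The main obstacle, as flagged, is the ball hull half of (b): the asymmetry between $C$ and $-C$ forces the sliding in the $-(\lambda'-\lambda)c'$ direction, and the passage from $(z-x)+(\lambda'-\lambda)C\subset\lambda'C$ to $z-x\in\lambda C$ via support functions is the only place one needs more than set-theoretic manipulation.
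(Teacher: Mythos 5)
Your proposal is correct, and parts (a), (c), and (d) coincide with the paper's argument: set/center monotonicity for (a), the tautology $x\in\bi(K,C,\lambda)\Leftrightarrow K\subset x+\lambda C$ for (c), and the observation that the diameter hypothesis makes every point of $K$ an admissible center for (d). The genuine divergence is the ball-hull half of (b). The paper disposes of it in one line: from $K\subset x+\lambda C\Rightarrow K\subset x+\lambda' C$ it deduces $\setconds{x}{K\subset x+\lambda C}\subset\setconds{x}{K\subset x+\lambda' C}$ and concludes $\bh(K,C,\lambda)\supset\bh(K,C,\lambda')$. As written this is not a complete deduction, since both the index set \emph{and} the intersected translates change with the radius: the monotonicity of the center set only gives $\bh(K,C,\lambda')\subset\bigcap_{x:K\subset x+\lambda C}(x+\lambda' C)$, which is a superset of $\bh(K,C,\lambda)$ rather than a subset. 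Your sliding argument closes exactly this gap: for an admissible center $x$ at radius $\lambda$ and arbitrary $c'\in C$, the convexity estimate $(\lambda'-\lambda)c'+\lambda C\subset\lambda' C$ makes $x-(\lambda'-\lambda)c'$ admissible at radius $\lambda'$, so $(z-x)+(\lambda'-\lambda)C\subset\lambda' C$ for every $z\in\bh(K,C,\lambda')$, and the support-function comparison $\skpr{u}{z-x}\leq\lambda h_C(u)$ together with the fact that the compact convex set $\lambda C$ is the intersection of its supporting half-spaces yields $z\in x+\lambda C$. This costs you the (standard) duality step but buys a proof that actually works for non-symmetric $C$, whereas the paper's shortcut leaves the essential point unargued; the rest of your write-up matches the paper step for step.
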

\begin{proof}
Statement~\ref{intersections_set_monotone} follows directly from Definition~\ref{def:intersections}. In order to prove the first statement of \ref{intersections_radius_monotone}, observe that $x-\lambda C\subset x-\lambda^\prime C$ for all $x\in K$. This implies
\begin{equation*}
\bi(K,C,\lambda)=\bigcap_{x\in K}(x-\lambda C)\subset\bigcap_{x\in K}(x-\lambda^\prime C)=\bi(K,C,\lambda^\prime).
\end{equation*}
The second part of \ref{intersections_radius_monotone} is easy as well. Observe that $K\subset x+\lambda^\prime C$ if $K\subset x+\lambda C$. Therefore $\setcond{x\in\RR^d}{x+\lambda C\supset K}\subset \setcond{x\in\RR^d}{x+\lambda^\prime C\supset K}$, which yields $\bh(K,C,\lambda)\supset \bh(K,C,\lambda^\prime)$.
Let us prove the first statement in \ref{intersections_compositions}. We have
\begin{align*}
&\noequiv y\in \bh(K,C,\lambda)\\
&\Longleftrightarrow y\in \bigcap_{x\in\RR^d:\,K\subset x+\lambda C}(x+\lambda C)\\
&\Longleftrightarrow y\in \bigcap_{x \in \bi(K,C,\lambda)}(x+\lambda C)\\
&\Longleftrightarrow y\in \bi(\bi(K,C,\lambda),-C,\lambda),
\end{align*}
simply because, for $x\in\RR^d$, we have $K\subset x+\lambda C \Longleftrightarrow z\in x+\lambda C \fall z\in K\Longleftrightarrow x\in z-\lambda C \fall z\in K\Longleftrightarrow x\in\bi(K,C,\lambda)$. The second statement in \ref{intersections_compositions} follows from the representation
\begin{equation*}
\bi(K,C,\lambda)=\setcond{x\in\RR^d}{K\subset x+\lambda C}
\end{equation*}
and the equivalence $K\subset x+\lambda C \Longleftrightarrow \bh(K,C,\lambda)\subset x+\lambda C$. Indeed, if $K$ is contained in a translate of $\lambda C$, then $\bh(K,C,\lambda)$ is a subset of this translate because it is the intersection of all translates of $\lambda C$ that contain $K$. Conversely, since $\bh(K,C,\lambda)$ is the intersection of all translates of $\lambda C$ that contain $K$, it follows that $K$ is a subset of $\bh(K,C,\lambda)$ itself, and therefore it is contained in each translate of $\lambda C$ that contains $\bh(K,C,\lambda)$.

Now assume that $\sup\setcond{\gamma_C(x-y)}{x,y\in K}=\lambda$. Then, for all $x\in K$, the set $x+\lambda C$ is a translate of $\lambda C$ that contains $K$. Hence $\bh(K,C,\lambda)\subset \bi(K,-C,\lambda)$.
\end{proof}
\begin{Bem}
Since we have $K\subset x+\lambda C \Longleftrightarrow \bh(K,C,\lambda)\subset x+\lambda C$, it follows that the relation $\bh(K,C,\lambda)=\bh(\bh(K,C,\lambda),C,\lambda)$ is valid. Let us also prove the relation
\begin{equation*}
\bi(K,C,\lambda)=\bh(\bi(K,C,\lambda),-C,\lambda).
\end{equation*}
The inclusion \enquote{$\subset$} is trivial because $\bh(J,-C,\lambda)$ is, by definition, the intersection of supersets of $J\defeq \bi(K,C,\lambda)$. It remains to show the reverse inclusion. By definition, $K\subset\setcond{y\in\RR^d}{y-\lambda C\supset\bi(K,C,\lambda)}$. We obtain
\begin{align*}
\bi(K,C,\lambda)&=\bigcap_{y\in K} (y-\lambda C)\\
&\supset \bigcap_{y\in\RR^d:\,y-\lambda C\supset\bi(K,C,\lambda)} (y-\lambda C)\\
&=\bh(\bi(K,C,\lambda),-C,\lambda).
\end{align*}
\end{Bem}

A special case of the ball intersection is the set 
\begin{equation*}
\cc(K,C)\defeq \setcond{x\in\RR^d}{K\subset x+R(K,C)C}
\end{equation*}
of \emph{circumcenters} of $K$ with respect to $C$. (Sometimes, it is also called the \emph{Chebyshev set} of $K$, see \cite{MartinMaSp2014}.) This follows easily from \eqref{eq:circumradius_via_ball_intersection}. The set $\cc(K,C)$ can also be viewed as the solution set of the optimization problem \eqref{eq:minimax}.
The set 
\begin{equation*}
\ic(K,C)\defeq \setcond{x\in\RR^d}{x+r(K,C)C\subset K}
\end{equation*}
is called the set of \emph{incenters} of $K$ with respect to $C$. A short computation shows that introducing the inradius in a separate way is not necessary because it is the inverse of the circumradius for interchanged arguments:
\begin{align}
R(C,K)&=\inf\setcond{\lambda\geq 0}{\ex x\in \RR^d: C\subset x+\lambda K}\nonumber\\
&=\inf\setcond{\lambda^{-1}\geq 0}{\ex x\in \RR^d: C\subset x+\lambda K}^{-1}\nonumber\\
&=\inf\setcond{\lambda^{-1}\geq 0}{\ex x\in \RR^d: -\lambda^{-1}x+\lambda^{-1}C\subset K}^{-1}\nonumber\\
&=\inf\setcond{\mu\geq 0}{\ex x\in \RR^d: y+\mu C\subset K}^{-1}\nonumber\\
&=r(K,C)^{-1},\label{eq:inradius_vs_circumradius}
\end{align}
see \cite[p.~5]{BrandenbergKoe2014}. Having a closer look at this computation, we see that the set $\ic(K,C)$ can be computed in terms of $\cc(C,K)$ and $r(K,C)$. Namely, $y\in \ic(K,C)$ if and only if there exists $x\in \cc(C,K)$ such that $y=-r(K,C)x$. In other words,
\begin{equation}
\ic(K,C)=-r(K,C)\cc(C,K).\label{eq:ic_vs_cc}
\end{equation}
Now assume that $K,C\in \KK^d_0$. By the above reasons, $K$ has a circumcenter with respect to $C$ and vice versa. Furthermore, one can easily verify that $R(K,C),R(C,K)>0$. By virtue of \eqref{eq:inradius_vs_circumradius} and \eqref{eq:ic_vs_cc}, this is sufficient for $r(K,C)>0$ and $\ic(K,C)\neq\emptyset$. Equation \eqref{eq:ic_vs_cc} also tells us that, for understanding circumcenter sets and incenter sets, it suffices to describe the shape of $\cc(K,C)$, where $K$ and $C$ are arbitrary sets from $\KK^d_0$. 
Hence, $\cc(K,C)=\bigcap_{y\in K} (y-R(K,C) C)=\bi(K,C,R(K,C))$ is the intersection of a family of bounded closed convex sets.
\begin{Prop}
For any $K,C\in \KK^d_0$, the set $\cc(K,C)$ is non-empty, bounded, closed, and convex. Moreover, if $C$ is a polytope, then so is $\cc(K,C)$. 
\end{Prop}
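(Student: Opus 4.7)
The plan is to work directly with the representation $\cc(K,C) = \bi(K,C,R(K,C)) = \bigcap_{y \in K}(y - R(K,C)C)$ noted just before the proposition. Closedness and convexity are immediate, since each set $y - R(K,C)C$ is a translate of the compact convex set $-R(K,C)C$. Boundedness follows by observing that $\cc(K,C) \subset y_0 - R(K,C)C$ for any fixed $y_0 \in K$.

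For non-emptiness, the only nontrivial point, I would invoke \eqref{eq:circumradius_via_ball_intersection} together with a compactness argument. Fix $y_0 \in K$, pick $\lambda_n \downarrow R(K,C)$ with $\lambda_n > R(K,C)$, and choose $x_n \in \bi(K,C,\lambda_n)$; this is non-empty by \eqref{eq:circumradius_via_ball_intersection} combined with the monotonicity of $\bi(K,C,\cdot)$ established earlier. The sequence $(x_n)$ lies in the bounded set $y_0 - \lambda_1 C$, so one can extract a convergent subsequence $x_{n_k} \to x^*$; using the continuity of $\gamma_C$ (guaranteed by $0 \in \inte(C)$), we obtain $\gamma_C(y - x^*) \leq R(K,C)$ for every $y \in K$, which is the same as $x^* \in \cc(K,C)$.

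For the polytope claim, suppose $C$ is a polytope and write $C = \setcond{z \in \RR^d}{\langle u_i, z\rangle \leq h_C(u_i),\ i = 1, \ldots, m}$, where $u_1, \ldots, u_m$ are the outer facet normals of $C$. The condition $x \in y - R(K,C)C$ becomes $\langle u_i, x\rangle \geq \langle u_i, y\rangle - R(K,C) h_C(u_i)$ for each $i$; intersecting over $y \in K$ collapses the (possibly uncountable) family into a finite one, giving
\[\cc(K,C) = \bigcap_{i=1}^m \setcond{x \in \RR^d}{\langle u_i, x\rangle \geq h_K(u_i) - R(K,C) h_C(u_i)},\]
where $h_K(u_i) = \sup_{y \in K}\langle u_i, y\rangle$. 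This exhibits $\cc(K,C)$ as a bounded polyhedron, hence as a polytope.

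I expect the compactness argument for non-emptiness to be the only genuinely non-formal step; the polytope claim becomes clear once one observes that, for each facet normal $u_i$ of $-C$, only the extremal value $h_K(u_i)$ produces a binding half-space, so the a priori infinite intersection reduces to a finite one.
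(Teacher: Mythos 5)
Your proposal is correct and follows essentially the same route the paper takes: the paper justifies the proposition solely by the preceding remark that $\cc(K,C)=\bi(K,C,R(K,C))=\bigcap_{y\in K}(y-R(K,C)C)$ is an intersection of bounded closed convex sets, together with the earlier-mentioned standard compactness argument for non-emptiness. Your write-up merely fills in the details the paper leaves implicit (the extraction of a convergent subsequence for non-emptiness and the reduction of the infinite intersection to the finitely many half-spaces $\langle u_i,x\rangle\geq h_K(u_i)-R(K,C)h_C(u_i)$ for the polytope claim), and both of these details are sound.
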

\begin{Kor}
For any $K,C\in \KK^d_0$, the set $\ic(K,C)$ is non-empty, bounded, closed, and convex. Moreover, if $K$ is a polytope, then so is $\ic(K,C)$. 
\end{Kor}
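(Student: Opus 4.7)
The plan is to derive this directly from the preceding Proposition by the identity $\ic(K,C)=-r(K,C)\cc(C,K)$ established in equation~\eqref{eq:ic_vs_cc}. The Proposition, applied with $K$ and $C$ interchanged, tells us that $\cc(C,K)$ is non-empty, bounded, closed, convex, and a polytope whenever $K$ (the second argument in that application) is a polytope. All that remains is to argue that scaling by the factor $-r(K,C)$ preserves each of these properties.

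First I would check that $r(K,C)>0$ so that $-r(K,C)$ defines an invertible affine map $x\mapsto -r(K,C)x$. This is precisely what is noted in the paragraph preceding the Proposition: since $K,C\in\KK^d_0$, both $R(K,C)$ and $R(C,K)$ are positive, and by \eqref{eq:inradius_vs_circumradius} we have $r(K,C)=R(C,K)^{-1}>0$. Under the map $x\mapsto -r(K,C)x$, non-emptiness, boundedness, closedness, convexity, and the property of being a polytope are all manifestly preserved, because the map is a linear homeomorphism of $\RR^d$.

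Combining these two observations, $\ic(K,C)$ inherits all four topological/convexity properties from $\cc(C,K)$. For the final clause, I would invoke the second part of the Proposition with swapped arguments: if $K$ is a polytope, then $\cc(C,K)$ is a polytope, and hence so is its image $-r(K,C)\cc(C,K)=\ic(K,C)$.

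There is no real obstacle here — the corollary is essentially an unpacking of \eqref{eq:ic_vs_cc} together with the Proposition. The only point requiring a brief justification is the strict positivity of $r(K,C)$, which has already been recorded in the discussion immediately before the Proposition.
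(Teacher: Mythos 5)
Your proposal is correct and is exactly the argument the paper intends: the corollary is left without an explicit proof precisely because it follows from the Proposition applied to $\cc(C,K)$ together with the identity $\ic(K,C)=-r(K,C)\cc(C,K)$ and the positivity of $r(K,C)$ recorded just before the Proposition. Your handling of the polytope clause (swapping the roles of $K$ and $C$ so that the hypothesis falls on $K$) is also the right reading.
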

A direct consequence of the definition and formula \eqref{eq:ic_vs_cc} is the following statement.
\begin{Prop}
If $K,C\in\KK^d_0$ are centrally symmetric, then so are $\cc(K,C)$ and $\ic(K,C)$.
\end{Prop}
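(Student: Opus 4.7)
The plan is to reduce to the centered case by translations and then read off the symmetry from the intersection representation $\cc(K,C)=\bigcap_{y\in K}(y-R(K,C)C)$.

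First I would record that, for all $a,b\in\RR^d$, one has $\cc(a+K,C)=a+\cc(K,C)$ and $\cc(K,b+C)=-R(K,C)b+\cc(K,C)$, both of which follow straight from the definition of $\cc$ together with the translation invariance of $R(\cdot,\cdot)$ already established. Consequently, if $K$ is centrally symmetric with center $z_K$ and $C$ is centrally symmetric with center $z_C$, then $K=z_K+\tilde K$ and $C=z_C+\tilde C$ with $\tilde K=-\tilde K$ and $\tilde C=-\tilde C$, and $\cc(K,C)$ is a translate of $\cc(\tilde K,\tilde C)$. So it suffices to prove that $\cc(\tilde K,\tilde C)$ is centrally symmetric with respect to the origin, because any translate of an origin-symmetric set is centrally symmetric.

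Next I would verify this centered case directly. Abbreviate $\rho\defeq R(\tilde K,\tilde C)$. For $x\in\cc(\tilde K,\tilde C)$ and any $y\in\tilde K$, the inclusion $x\in y-\rho\tilde C$ means $y-x\in\rho\tilde C$. Given any $y\in\tilde K$, apply this to $-y\in\tilde K$ (using $\tilde K=-\tilde K$) to obtain $-y-x\in\rho\tilde C$, and hence $y-(-x)=-(-y-x)\in\rho\tilde C$ (using $\tilde C=-\tilde C$). Thus $-x\in y-\rho\tilde C$ for every $y\in\tilde K$, which gives $-x\in\cc(\tilde K,\tilde C)$. This proves $\cc(\tilde K,\tilde C)=-\cc(\tilde K,\tilde C)$, finishing the claim for $\cc$.

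For the incenter set, I would simply invoke \eqref{eq:ic_vs_cc}, namely $\ic(K,C)=-r(K,C)\cc(C,K)$. Since $C$ and $K$ are centrally symmetric, the part already proved shows that $\cc(C,K)$ is centrally symmetric; multiplying by the scalar $-r(K,C)$ preserves central symmetry, so $\ic(K,C)$ is centrally symmetric as well.

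The only mildly subtle point is bookkeeping the shift of centers when passing from $(K,C)$ to $(\tilde K,\tilde C)$; no analytic obstacle arises because $R$ is invariant under translations of both arguments, so the translation of $\cc(K,C)$ relative to $\cc(\tilde K,\tilde C)$ is a fixed vector, and the symmetry of the latter transfers to central symmetry of the former.
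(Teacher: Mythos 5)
Your argument is correct, and it follows exactly the route the paper has in mind: the paper gives no written proof, stating only that the result is ``a direct consequence of the definition and formula \eqref{eq:ic_vs_cc},'' which is precisely what you carry out (translation reduction plus the origin-symmetric case for $\cc$, then \eqref{eq:ic_vs_cc} for $\ic$). Your translation identities $\cc(a+K,C)=a+\cc(K,C)$ and $\cc(K,b+C)=-R(K,C)b+\cc(K,C)$ are both verified correctly, so nothing is missing.
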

To our best knowledge, the first result concerning the dimension of the set of incenters is \cite[Satz~5]{Zindler1920} which only concerns the Euclidean plane. In the original version of Bonnesen and Fenchel's monograph from 1934 (see \cite[p.~59]{BonnesenFe1987} in the 1987 edition), one finds the following statement without any proof. The set of incenters of an arbitrary convex body in $d$ dimensions is an arbitrary convex body of dimension at most $d-1$. We will prove this property (of being not full-dimensional) for the set of circumcenters. Then the statement of Bonnesen and Fenchel follows immediately by \eqref{eq:ic_vs_cc}.
\begin{Satz}
For any $K,C\in \KK^d_0$, we have $\dim(\cc(K,C))\leq d-1$.
\end{Satz}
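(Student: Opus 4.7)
My plan is to argue by contradiction using inner parallel bodies. Suppose $\dim \cc(K,C) = d$, so that $\cc(K,C)$ contains an open Euclidean ball $x_0 + \epsilon B$ with $\epsilon > 0$, and write $R := R(K,C)$. For every $v \in \epsilon B$, the point $x_0 + v$ is a circumcenter, giving $K \subset x_0 + v + RC$, that is, $K - x_0 \subset v + RC$. Intersecting over all $v \in \epsilon B$ and using $\epsilon B = -\epsilon B$, I obtain
\[
K - x_0 \subset \bigcap_{v \in \epsilon B}(v + RC) = \setcond{y \in \RR^d}{y + \epsilon B \subset RC} =: S.
\]

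Next I would observe that every $y \in S$ has a Euclidean neighborhood entirely inside $RC$ and therefore lies in $\inte(RC)$, which translates to $\gamma_C(y) < R$. The main (and only) nontrivial point is to promote this pointwise bound to a uniform one: since $S$ is closed and bounded, hence compact, and $\gamma_C$ is continuous (because $0 \in \inte(C)$), the maximum $\lambda := \max_{y \in S} \gamma_C(y)$ is attained and satisfies $\lambda < R$. Consequently $S \subset \lambda C$.

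Combining the two inclusions yields $K \subset x_0 + \lambda C$ with $\lambda < R$, which contradicts the minimality in $R = R(K,C) = \inf\setcond{\mu \geq 0}{\ex x \in \RR^d: K \subset x + \mu C}$. The one step that deserves a sentence of justification in a full write-up is the compactness/continuity argument upgrading the pointwise strict inequality $\gamma_C(y) < R$ to a uniform strict inequality on $S$; everything else is bookkeeping with the definitions of $\cc(K,C)$ and $\gamma_C$.
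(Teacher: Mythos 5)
Your proof is correct, but it takes a genuinely different route from the paper's. The paper argues by local perturbation: assuming $x\in\inte(\cc(K,C))$, it picks a farthest point $y\in K$ with $\gamma_C(y-x)=R(K,C)$ (attained by compactness of $K$) and moves the center slightly away from $y$, obtaining a point of $\cc(K,C)$ whose translate of $R(K,C)C$ no longer covers $y$ --- a contradiction. You instead intersect the containments $K\subset x_0+v+R C$ over all $v\in\eps B$ and place $K-x_0$ inside the inner parallel body $S=\setcond{y\in\RR^d}{y+\eps B\subset RC}$, which is compact and contained in $\inte(RC)$, hence in $\lambda C$ for some $\lambda<R$ by the continuity-plus-compactness step you flag; this contradicts the minimality of $R(K,C)$. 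Your version avoids exhibiting an extremal point of $K$ (and the sign bookkeeping that an asymmetric gauge forces when one perturbs the center), at the price of the compactness argument for $S$; it also produces an explicit strictly smaller enclosing radius --- indeed one can make it quantitative, since $C\subset\rho B$ gives $\lambda\leq R-\eps/\rho$ --- whereas the paper's perturbation only certifies that one particular interior point fails to be a circumcenter. In a full write-up you should record the two facts you use silently: a $d$-dimensional convex set has nonempty interior (so the ball $x_0+\eps B$ exists), and $S\neq\emptyset$ because it contains $K-x_0$.
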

\begin{proof}
By convexity of $\cc(K,C)$, it suffices to prove the emptiness of its interior. By the invariance of $R(K,C)$ under translations of its arguments, we may assume $0\in\inte(C)$. Assume there is a point $x\in\inte(\cc(K,C))$. Then there exist $\eps>0$ and $y\in K$ with $x+\eps C\in \cc(K,C)$ and $\gamma_C(y-x)=R(K,C)$. On the one hand we have 
\begin{equation*}
z\defeq y-\lr{1+\frac{\eps}{2\gamma_C(y-x)}}(y-x)=x+\frac{\eps}{2\gamma_Cy-x)}(y-x)\in x+\eps C\subset \cc(K,C),
\end{equation*}
but on the other hand $\gamma_C(y-z)=\bigl(1+\frac{\eps}{2\gamma_C(y-x)}\bigr)\gamma_C(y-x)>\gamma_C(y-x)=R(K,C)$. This implies $y\notin z+R(K,C)C$, a contradiction.
\end{proof}

\section{Concluding remarks}\label{chap:conclusions}
In the present paper, we provide some results that can be taken as starting points for advanced research on metrical problems in generalized Minkowski spaces. However, there are some questions left.

It is convenient to alter the definitions of those classical successive radii which incoporate projections such that they rather use cylinders. What happens if we still use the classical definitions but alter the notion of projections? More precisely, replace $\Proj_A(x)$ by $\Proj^C_A(x)\defeq \setcond{y\in\RR^d}{\gamma_C(x-y)=\dist_C(x,A)}$ where $A$ is an affine flat. The question is whether basic properties, like the monotonicity with respect to the dimension of the participating affine flats, still remain valid?

Can we estabish estimates for taking Minkowski sums in the first argument of successive radii like in \cite{Gonzalez2013}? What are appropriate definitions of successive diameters and width in generalized Minkowski spaces, and what can be said about continuity of all these quantities (cf. \cite{BetkeHe1992})? Is there another way for defining the quantities $r_\pi^j$ in order to have also $r_\pi^1=\frac{1}{2}D$? 

Kolmogorov numbers of a linear operator $T:X\to Y$ between Banach spaces $X$ and $Y$ are well-studied quantities in approximation theory, see \cite[p.~95]{Pietsch1987}. In the setting of normed spaces, the $j$th Kolmogorov number is commonly defined as 
\begin{equation*}
\inf\setcond{\mnorm{Q_N\circ T}}{N \text{ linear subspace of }Y,\dim(N)<j},
\end{equation*}
where $Q_N$ is the quotient mapping $Y\to Y/N$ and $\mnorm{Q_N\circ T}$ is the usual operator norm on the vector space of bounded linear operators $X\to Y/N$. As the authors of \cite{GonzalezHeHi2015} point out, the $j$th Kolmogorov number is equal to
\begin{equation*}
\inf_{\substack{N \text{ linear subspace of }Y,\\\dim(N)<j}}\sup_{\mnorm{x}\leq 1}\inf_{y\in N}\mnorm{T(x)-y}.
\end{equation*}
Using the monotonicity of this quantity with respect to $j$ and \cite[Proposition~(3.1)]{AlegreFe2007}, we can translate Kolmogorov numbers into our setting. Given $K,C\in \KK^d_0$ with $0\in\inte{C}\cap \inte(K)$, consider
\begin{equation*}
\inf_{N\in \LL^d_{j-1}}\sup_{x\in K}\inf_{y\in N}\gamma_C(T(x)-y).
\end{equation*}
For $T:\RR^d\to\RR^d$, $T(x)=x$, this number is simply $R^\pi_{d-j+1}(K,C)$. Is there an appropriate generalization of other so-called \emph{$s$-numbers} to gauges, which can be nicely combined with $r_\sigma^j$ and $R^\pi_j$ like in \cite[Theorem~2.1]{GonzalezHeHi2015}?

\providecommand{\bysame}{\leavevmode\hbox to3em{\hrulefill}\thinspace}

\end{document}